\newcommand{\old}[1]{}
\renewcommand{\emph}[1]{\textit{#1}}
\definecolor{brown}{cmyk}{0, 0.72, 1, 0.45}
\definecolor{grey}{gray}{0.5}
\newcommand{\omvar}{\om_{\textrm{var}}}
\newcommand{\ommm}{\om_{\textrm{MM}}}
\newcounter{rot}
\newcommand{\tv}[1]{||#1||_{\rm TV}}
\newcommand{\pmin}{\pi_{\mathrm{min}}}
\newcommand{\ignore}[1]{}
\def\ii_(#1,#2){i_{#1}^{#2}}
\def\cM{\mathcal{M}}
\def\a{\alpha}
\def\d{\delta}
\def\e{\varepsilon}
\def\g{\gamma}
\def\p{\pi}
\def\r{\rho}
\def\s{\sigma}
\def\t{\tau}
\def\om{\omega}
\newcommand{\sbs}{\subseteq}
\def\Re{\mathbb{R}}
\newcommand{\brac}[1]{\left( #1 \right)}
\newcommand{\expect}{\operatorname{\bf E}}
\def\E{\expect}
\renewcommand{\Pr}{\operatorname{\bf Pr}}
\newtheorem{theorem}{Theorem}[section]
\newtheorem{corollary}[theorem]{Corollary}
\newtheorem{observation}[theorem]{Observation}
\newcounter{thmtemp}
\newcommand{\nospace}[1]{}
\def\path{\operatorname{PATH}}
\renewcommand{\Re}{\mathbb{R}}
\newcommand{\flr}[1]{\lfloor #1 \rfloor}
\newcommand{\arabicifpos}[1]{\@arabicifpos{\@nameuse{c@#1}}}
\newcommand{\@arabicifpos}[1]{\ifnum #1>0 \number #1\fi}
\begin{document}

\title{Assessing significance in a Markov chain without mixing}

\author{Maria Chikina}
\address{Department of Computational and Systems Biology\\
University of Pittsburgh\\
3078 Biomedical Science Tower 3\\
Pittsburgh, PA 15213\\
U.S.A.}
\email[email: ]{mchikina@pitt.edu}
\thanks{Research supported in party by NIH grants 1R03MH10900901A1 and U545U54HG00854003.}
\author{Alan Frieze}
\email[email: ]{alan@random.math.cmu.edu}
\thanks{Research supported in part by NSF Grants DMS1362785, CCF1522984 and a grant(333329) from the Simons Foundation.}
\author{Wesley Pegden}
\email[email: ]{wes@math.cmu.edu}
\thanks{Research supported in part by NSF grant DMS-1363136 and the Sloan foundation.}
\address{Department of Mathematical Sciences\\
Carnegie Mellon University\\
Pittsburgh, PA 15213\\
U.S.A.}

\date{\today}

\begin{abstract}
We present a new statistical test to detect that a presented state of a reversible Markov chain was not chosen from a stationary distribution.  In particular, given a value function for the states of the Markov chain, we would like to demonstrate rigorously that the presented state is an outlier with respect to the values, by establishing a $p$-value under the null hypothesis that it was chosen from a stationary distribution of the chain.

A simple heuristic used in practice is to sample ranks of states from long random trajectories on the  Markov chain, and compare these to the rank of the presented state; if the presented state is a $0.1\%$-outlier compared to the sampled ranks (its rank is in the bottom $0.1\%$ of sampled ranks) then this should correspond to a $p$-value of $0.001$.  This is not rigorous, however, without good bounds on the mixing time of the Markov chain.

Our test is the following: given the presented state in the Markov chain, take a random walk \emph{from the presented state} for any number of steps.  We prove that observing that the presented state is an $\e$-outlier on the walk is significant at $p=\sqrt {2\e}$, under the null hypothesis that the state was chosen from a stationary distribution.  We assume nothing about the Markov chain beyond reversibility, and show that significance at $p\approx\sqrt \e$ is essentially best possible in general.
We illustrate the use of our test with a potential application to the rigorous detection of gerrymandering in Congressional districtings.  


\end{abstract}

\maketitle
\setcitestyle{super}
\footnotetext[0]{\hspace{-1em}Author list is alphabetical.}
\setcitestyle{numbers}
\section{Introduction}
The essential problem in statistics is to bound the probability of a surprising observation, under a \emph{null hypothesis} that observations are being drawn from some unbiased probability distribution.  This calculation can fail to be straightforward for a number of reasons.  On the one hand, defining the way in which the outcome is surprising requires care; for example, intricate techniques have been developed to allow sophisticated analysis of cases where multiple hypotheses are being tested.  On the other hand, the correct choice of the unbiased distribution implied by the null hypothesis is often not immediately clear; classical tools like the $t$-test are often applied by making simplifying assumptions about the distribution in such cases.  If the distribution is well-defined, but not be amenable to mathematical analysis, a $p$-value can still be calculated using bootstrapping, if test samples can be drawn from the distribution.

A third way for $p$-value calculations to be nontrivial occurs when the observation is surprising in a simple way, the null hypothesis distribution is known, but where there is no simple algorithm to draw samples from this distribution.  In these cases, the best candidate method to sample from the null hypothesis is often through a \emph{Markov chain}, which essentially takes a long random walk on the possible values of the distribution.  Under suitable conditions, theorems are available which guarantee that the chain converges to its \emph{stationary distribution}, allowing a random sample to be drawn from a distribution quantifiably close to the target distribution.   This principle has given rise to diverse applications of Markov chains, including to simulations of chemical reactions, to Markov chain Monte Carlo statistical methods, to protein folding, and to statistical physics models.

A persistent problem in applications of Markov chains is the often unknown \emph{rate} at which the chain converges to the stationary distribution\cite{gelman1992inference,Gelman92asingle}.  It is rare to have rigorous results on the mixing time of a real-world Markov chain, which means that in practice, sampling is performed by running a Markov chain for a ``long time'', and hoping that sufficient mixing has occurred.  In some applications, such as in simulations of the Potts model from statistical physics, practitioners have developed modified Markov chains in the hopes of achieving faster convergence \cite{PhysRevLett.58.86}, but such algorithms have still been demonstrated to have exponential mixing times in many settings \cite{borgs2012tight,cooper1999mixing,gore1999swendsen}.

In this paper, we are concerned with the problem of assessing statistical significance in a Markov chain without requiring results on the mixing time of the chain, or, indeed, any special structure at all in the chain beyond reversibility.     Formally, we consider a reversible Markov chain $\cM$ on a state space $\Sigma$, which has an associated label function $\omega:\Sigma\to \Re$.  (The definition of Markov chain is recalled at the end of this section.)  The labels constitute auxiliary information, and are not assumed to have any relationship to the transition probabilities of $\cM$.  We would like to demonstrate that a presented state $\s_0$ is unusual for states drawn from a stationary distribution $\pi$.
If we have good bounds on the mixing time of $\cM$, then we can simply sample from a distribution of $\om(\pi)$, and use bootstrapping to obtain a rigorous $p$-value for the significance of the smallness of the label of $\s_0$.  Such bounds are rarely available, however.   

We propose the following simple and rigorous test to detect that $\s_0$ is unusual relative to states chosen randomly according to $\p$, which does not require bounds on the mixing rate of $\cM$:

\begin{center}
\framebox{\parbox{.96\linewidth}{\textbf{The $\sqrt{\e}$ test:} Observe a trajectory $\s_0,\s_1,\s_2\dots,\s_k$ from the state $\s_0$, for any fixed $k$.  The event that $\om(\s_0)$ is an $\e$-outlier among $\om(\s_0),\dots, \om(\s_k)$ is significant at $p=\sqrt {2\e}$, under the null-hypothesis that $\s_0\sim \p$.}}
\end{center}
Here, we say that a real number $\a_0$ is an \emph{$\e$-outlier} among $\a_0,\a_2,\dots,\a_k$ if there are at most $\e(k+1)$ indices $i$ for which $\a_i\leq \a_0$.  In particular, note for the $\sqrt \e$ test, the only relevant feature of the label function is the ranking it imposes on the elements of $\Sigma$.  In the Supplement, we consider the statistical power of the test, and show that the relationship $p\approx \sqrt{\e}$ is best possible.   We leave as an open question whether the constant $\sqrt 2$ can be improved.

Roughly speaking, this kind of test is possible because a reversible Markov chain cannot have many \emph{local outliers} (Figure \ref{f.simplechain}).  Rigorously, the validity of the test is a consequence of the following theorem.

\begin{theorem}\label{t.gtest}
Let $\cM=X_0,X_1,\dots$ be a reversible Markov chain with a stationary distribution $\p$, and suppose the states of $\cM$ have real-valued labels.  If $X_0\sim \p$, then for any fixed $k$, the probability that the label of $X_0$ is an $\e$-outlier from among the list of labels observed in the trajectory $X_0,X_1,X_2,\dots,X_k$ is at most $\sqrt{2\e}$.
\end{theorem}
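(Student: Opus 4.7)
The plan is to combine a reversibility-based symmetrization with a Cauchy--Schwarz step to reduce the bound to an estimate on a ``double-outlier'' event. First, since the chain is reversible, one can run it backward in time from $X_0$ for $k$ steps to obtain states $X_{-1},\dots,X_{-k}$; because the reverse kernel coincides with the forward kernel, the combined two-sided trajectory $(X_{-k},\dots,X_0,\dots,X_k)$ has the same joint distribution as a stationary length-$(2k+1)$ forward trajectory when $X_0\sim\p$. The key property of this construction is that, conditional on $X_0=\sigma$, the forward half $(X_0,X_1,\dots,X_k)$ and the backward half $(X_0,X_{-1},\dots,X_{-k})$ are independent, each distributed as a forward excursion of the original chain from $\sigma$.

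Next, I would set $g(\sigma)=\Pr[X_0\text{ is an }\e\text{-outlier in the forward half}\mid X_0=\sigma]$, so that $p=\E_{\sigma\sim\p}[g(\sigma)]$. By Cauchy--Schwarz, $p^2\le \E_\p[g(\sigma)^2]$. By the conditional independence of the two halves, together with reversibility (which makes the backward half conditionally equidistributed with the forward half given $\sigma$), $\E_\p[g^2]$ equals the probability that $X_0$ is an $\e$-outlier in \emph{both} halves of the two-sided trajectory. Moreover, if this joint event occurs then at most $\e(k+1)$ labels are $\le \om(X_0)$ in each half, and since the two halves share only $X_0$ itself, the rank of $X_0$ in the combined length-$(2k+1)$ trajectory is at most $2\e(k+1)-1$.

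It therefore suffices to show that this joint event has probability at most $2\e$, which gives $p^2\le 2\e$ and hence $p\le \sqrt{2\e}$. The main obstacle is precisely this final estimate. A naive Markov bound on $\mathrm{rank}(X_0)$ in the length-$(2k+1)$ trajectory---using the pairwise reversibility identity $\Pr[\om(Y_s)\le\om(Y_t)]=\Pr[\om(Y_t)\le\om(Y_s)]\ge \tfrac12$, which yields $\E[\mathrm{rank}(X_0)]=k+1$ in the continuous case---only produces a constant bound of order $\tfrac12$, not a bound that shrinks with $\e$. Closing the gap to $2\e$ requires exploiting the stronger ``both halves'' structure, rather than only its implied combined-rank bound. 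I would leverage the sum identity $\sum_{t} \Pr[\mathrm{rank}(Y_t)\le r]=r$ in the stationary length-$(2k+1)$ trajectory together with the conditional independence of the two half-counts given $X_0$ to argue that the joint event's probability is controlled by the average fraction of ``low-rank'' positions, producing the $2\e$ bound and hence the theorem.
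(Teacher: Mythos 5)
Your ingredients up to the last step are essentially the ones the paper uses: reversibility makes the backward half of the doubled trajectory conditionally equidistributed with the forward half, the two halves are conditionally independent given $X_0$, and Cauchy--Schwarz converts $p^2=(\E_\p g)^2\le\E_\p[g^2]$ into the probability of a ``double outlier'' event at the midpoint. The gap is the final estimate, which you correctly flag as the main obstacle but do not close, and the route you sketch for closing it does not work. You want to bound the probability that the \emph{middle} position $Y_k$ of a stationary length-$(2k+1)$ trajectory is a low-rank position by the \emph{average} over all $2k+1$ positions, namely $r/(2k+1)$. But the positions of a stationary reversible trajectory are not exchangeable, and the middle position can be strictly more likely than average to be low-rank. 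Concretely, take the two-state reversible chain with $\om(a)<\om(b)$, $P(a,b)=1$, $P(b,a)=P(b,b)=\tfrac12$, so $\p=(\tfrac13,\tfrac23)$: in a stationary trajectory $Y_0,Y_1,Y_2$ the middle term is the unique minimum with probability $\p(b)P(b,a)P(a,b)=\tfrac13$, while each endpoint is the unique minimum with probability $\p(a)P(a,b)P(b,b)=\tfrac16$, so the middle exceeds the average $\tfrac29$. Hence the sum identity $\sum_t\Pr[\mathrm{rank}(Y_t)\le r]\le r$ cannot by itself isolate the middle term, and the principle you invoke (``joint event probability is controlled by the average fraction of low-rank positions'') is false in general; whether the inequality $\Pr[\mbox{joint event}]\le 2\e$ happens to hold is moot, since no proof of it is offered.

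The paper's fix is to run your split-and-Cauchy--Schwarz argument at \emph{every} index of the original length-$(k+1)$ trajectory rather than only at the midpoint of a doubled one. Writing $\r_{j,\ell}^k(\s)$ for the probability that $X_j$ is $\ell$-small (at most $\ell$ other indices with label $\le\om(X_j)$) given $X_j=\s$, conditional independence plus reversibility give $\r_{j,2\ell}^k(\s)\ge\r_{0,\ell}^{j}(\s)\,\r_{0,\ell}^{k-j}(\s)\ge\big(\r_{0,\ell}^{k}(\s)\big)^2$ for each $j$, the last step using that $\r_{0,\ell}^{m}(\s)$ is nonincreasing in $m$. Taking expectations and applying Cauchy--Schwarz yields $\r_{j,2\ell}^k\ge(\r_{0,\ell}^k)^2$ \emph{uniformly in} $j$; summing over $j=0,\dots,k$ and using that at most $2\ell+1$ indices can ever be $2\ell$-small gives $(k+1)(\r_{0,\ell}^k)^2\le 2\ell+1$, hence $\r_{0,\ell}^k\le\sqrt{(2\ell+1)/(k+1)}\le\sqrt{2\e}$ for $\ell=\flr{\e(k+1)-1}$. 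The point is that one never compares the middle position to the others: every position's double-outlier probability is lower-bounded by the \emph{same} quantity $p^2$, and the deterministic count $2\ell+1$ then does the rest. If you replace your final paragraph with this summation over all split points, your argument becomes the paper's proof.
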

\begin{figure}
  \centering
  \includegraphics[width=.8\linewidth]{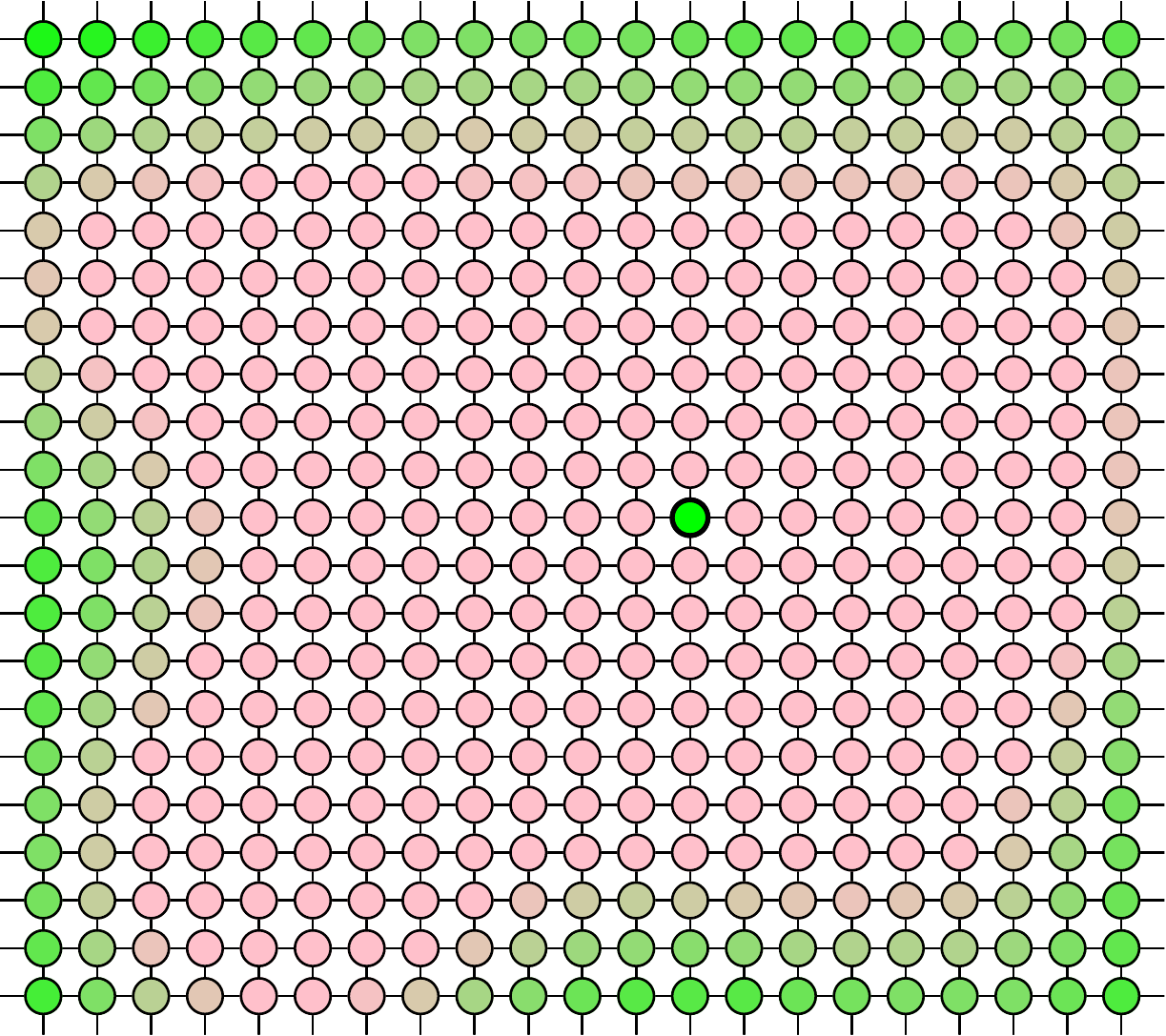}
  \caption{This schematic illustrates a region of a potentially much larger Markov chain with a very simple structure; from each state seen here, a jump is made with equal probabilities to each of the 4 neighboring states.  Colors from green to pink represent labels from small to large.  It is impossible to know from this local region alone whether the highlighted green state has unusually small label in this chain overall.  But to an unusual degree, this state is a \emph{local outlier}.  The $\sqrt \e$ test is based on the fact that \emph{no} reversible Markov chain can have too many local outliers.\label{f.simplechain}}
\end{figure}

We emphasize that Theorem \ref{t.gtest} makes no assumptions on the structure of the Markov chain beyond reversibility.  In particular, it applies even if the chain is not irreducible (in other words, even if the state space is not connected) even though in this case the chain will never mix.

In Section \ref{s.political} we apply the test to Markov chains generating random political districtings, for which no results on rapid mixing exist.  In particular, we show that for various simple choices of constraints on what constitutes a ``valid'' Congressional districting (e.g., that the districts are contiguous, and satisfy certain geometric constraints), the current Congressional districting of Pennsylvania is significantly biased, under the null hypothesis of a districting chosen at random from the set of valid districtings.  (We obtain $p$-values between $\approx 2.5\cdot 10^{-4}$ and $\approx 8.1\cdot 10^{-7}$ for the constraints we considered.)

One hypothetical application of the $\sqrt \e$ test is the possibility of rigorously demonstrating that a chain is not mixed.  In particular, suppose that Research Group 1 has run a reversible Markov chain for $n_1$ steps, and believes that this was sufficient to mix the chain.  Research Group 2 runs the chain for a further $n_2$ steps, producing a trajectory of total length $n_1+n_2$,  and notices that a property of interest changes in these $n_2$ further steps.  Heuristically, this suggests that $n_1$ steps was not sufficient to mix the chain, and the $\sqrt \e$ test quantifies this reasoning rigorously.  For this application, however, we must allow $X_0$ to be distributed not exactly as the stationary distribution $\pi$, but as some distribution $\pi'$ whose total variation distance to $\pi$ is small, as this is the scenario for a ``mixed'' Markov chain.  In the Supplement, we give a version of Theorem \ref{t.gtest} which applies in this scenario.

One area of research related to the present manuscript concerns methods for \emph{perfect sampling} from Markov chains.  Beginning with the Coupling From The Past algorithm of Propp and Wilson\cite{CFTP,CFTPguide} and several extensions\cite{Fill,Huber}, these techniques are designed to allow sampling of states \emph{exactly} from the stationary distribution $\pi$, without having rigorous bounds on the mixing time of the chain.  Compared with $\sqrt \e$ test, perfect sampling techniques have the disadvantage that they require the Markov chain to possess certain structure for the method to be implementable, and that the time it takes to generate each perfect sample is unbounded.  Moreover, although perfect sampling methods do not require rigorous bounds on mixing times to work, they will not run efficiently on a slowly mixing chain.  The point is that for a chain which has the right structure, and which actually mixes quickly (in spite of an absence of a rigorous bound on the mixing time), algorithms like CFTP can be used to rigorously generate perfect samples.    On the other hand, the $\sqrt \e$ test applies to \emph{any} reversible Markov chain, regardless of the structure, and has running time $k$ chosen by the user.  Importantly, it is quite possible that the test can detect bias in a sample even when $k$ is much smaller than the mixing time of the chain, as seems to be the case in the districting example discussed in Section \ref{s.political}.  Of course, unlike perfect sampling methods, the $\sqrt \e$ test can only be used to demonstrate a given sample is not chosen from $\pi$; it does not give a way for generating samples from $\pi$.

\section{Definitions}

We remind the reader that a Markov chain is a discrete time random process; at each step, the chain jumps to a new state, which only depends on the previous state.  Formally, a Markov chain $\cM$ on a state space $\Sigma$ is a sequence $\cM=X_0,X_1,X_2,\dots$ of random variables taking values in $\Sigma$ (which correspond to states which may be occupied at each step) such that for any $\s,\s_0,\dots,\s_{n-1}\in \Sigma$,
\begin{multline*}
\Pr(X_n=\s|X_0=\s_0,X_1=\s_1,\dots,X_{n-1}=\s_{n-1})\\=\Pr(X_1=\s|X_{0}=\s_{n-1}).
\end{multline*}
Note that a Markov chain is completely described by the distribution of $X_0$ and the transition probabilities $\Pr(X_1=\s_1|X_0=\s_0)$ for all pairs $\s_0,\s_1\in \Sigma$.  Terminology is often abused, so that the \emph{Markov chain} refers only to the ensemble of transition probabilities, regardless of the choice of distribution for $X_0$.

With this abuse of terminology, a \emph{stationary distribution} for the Markov chain is a distribution $\pi$ such that $X_0\sim \pi$ implies that $X_1\sim \pi$, and therefore that $X_i\sim \pi$ for all $i$.  When the distribution of $X_0$ is a stationary distribution, the Markov chain $X_0,X_1,\dots$ is said to be \emph{stationary}.  A stationary chain is said to be \emph{reversible} if for all $i,k$, the sequence of random variables $(X_i,X_{i+1},\dots,X_{i+k})$ is identical in distribution to the sequence $(X_{i+k},X_{i+k-1},\dots,X_{i})$.   Finally a chain is \emph{reducible} if there is a pair of states $\sigma_0,\sigma_1$ such that $\sigma_1$ is inaccessible from $\sigma_0$ via legal transitions, and \emph{irreducible} otherwise.

A simple example of a Markov chain is a random walk on a directed graph, beginning from an initial vertex $X_0$ chosen from some distribution.   Here $\Sigma$ is the vertex-set of the directed graph.  If we are allowed to label the directed edges with positive reals and the probability of traveling along an arc is proportional to the label of the arc (among those leaving the present vertex), then any Markov chain has such a representation, as the transition probability $\Pr(X_1=\s_1|X_0=\s_0)$ can be taken as the label of the arc from $\s_0$ to $\s_1$.  Finally, if the graph is undirected, the corresponding Markov chain is reversible.

\section{Detecting bias in political districting}
\label{s.political}
A central feature of American democracy is the selection of Congressional districts in which local elections are held to directly elect national representatives.  Since a separate election is held in each district, the proportions of party affiliations of the slate of representatives elected in a state does not always match the proportions of statewide votes cast for each party.  In practice, large deviations from this seemingly desirable target do occur.

Various tests have been proposed to detect \emph{gerrymandering} of districtings, in which a districting is drawn in such a way as to bias the resulting slate of representatives towards one party; this can be accomplished by concentrating voters of the unfavored party in a few districts.  One class of methods to detect gerrymandering concerns heuristic `smell tests' which judge whether a districting seems generally reasonable in its statistical properties (see, e.g., \cite{Wangthree,nagle}).  For example, such tests may frown upon districtings in which difference between the mean and median vote on district-by-district basis is unusually large \cite{McBest}.

The simplest statistical smell test, of course, is whether the party affiliation of the elected slate of representatives is close in proportion to the party affiliations of votes for representatives.  Many states have failed this simple test spectacularly, such as in Pennsylvania, where in 2012, 48.77\% of votes were cast for Republican representatives and 50.20\% for Democrat representatives, in an election which resulted in a slate of 13 Republican representatives and 5 Democrat representatives.

Heuristic statistical tests such as these all suffer from lack of rigor, however, due to the fact that the statistical properties of `typical' districtings are not rigorously characterized.    For example, it has been shown \cite{unintentional} that Democrats may be at a natural disadvantage when drawing electoral maps even when no bias is at play, because Democrat voters are often highly geographically concentrated in urban areas.  Particularly problematic is that the degree of geographic clustering of partisans is highly variable from state to state: what looks like a gerrymandered districting in one state may be a natural consequence of geography in another.  

Some work has been done in which the properties of a ``valid'' districting are defined (which may be required to have have roughly equal populations among districts, have districts with reasonable boundaries, etc.) so that the characteristics of a given districting can be compared with what would be ``typical'' for a valid districting of the state in question, by using computers to generate random districtings \cite{carolina,minority}; see also \cite{McBest} for discussion.  However, much of this work has relied on heuristic sampling procedures which do not have the property of selecting districtings with equal probability (and, more generally, whose distributions are not well-characterized), undermining rigorous statistical claims about the properties of typical districts.

In an attempt to establish a rigorous framework for this kind of approach, several groups \cite{fifield,cmu,duke} have used Markov chains to sample random valid districtings for the purpose of such comparisons.    
Like many other applications of real-world Markov chains, however, these methods suffer from the completely unknown mixing time of the chains in question.  Indeed, no work has even established that the Markov chains are irreducible (in the case of districtings, this means that any valid districting can be reached from any other by a legal sequence of steps), even if valid districtings were only required to consist of contiguous districts of roughly equal populations.  And, indeed, for very restrictive notions of what constitutes a valid districting, irreducibility certainly fails. 

\smallskip

\begin{figure*}
  \hspace{\stretch{1}}
  \includegraphics[width=.45\linewidth]{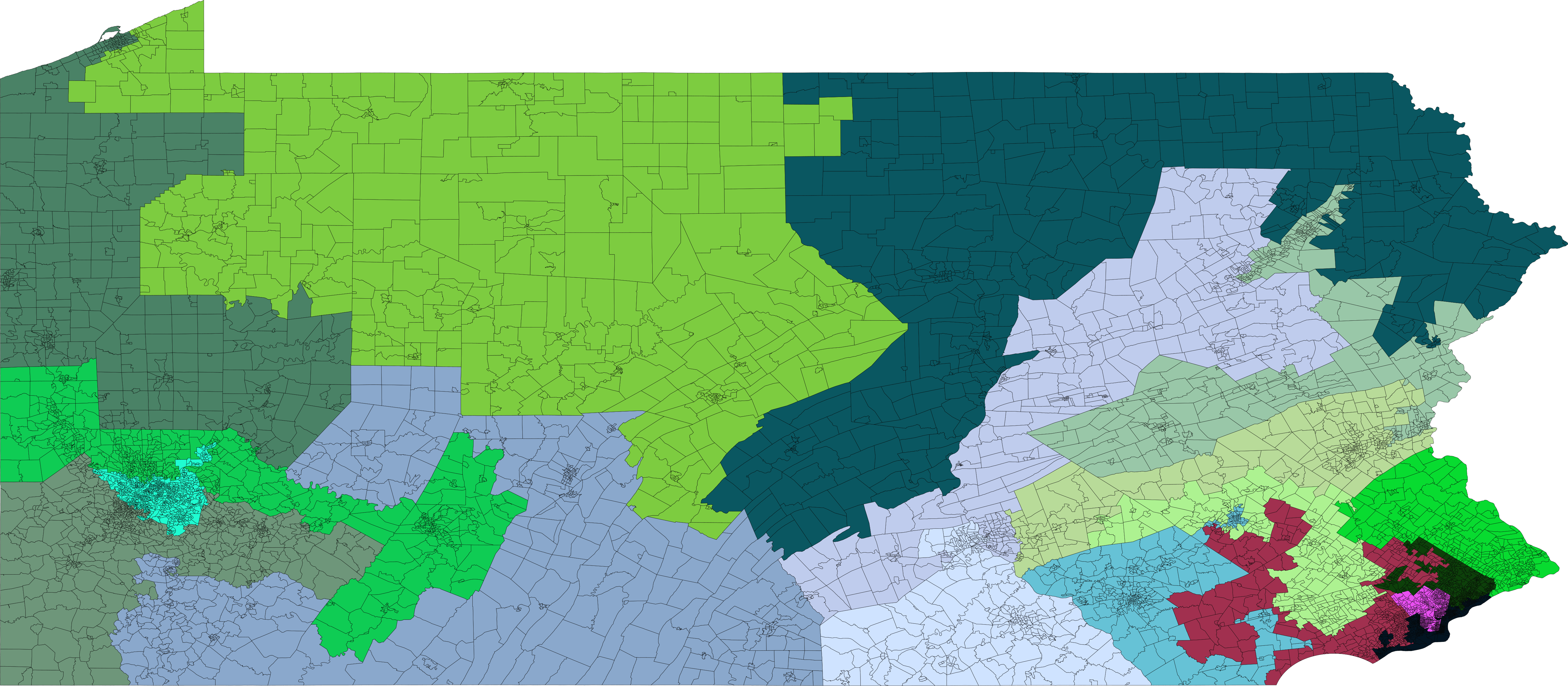}
  \hspace{\stretch{1}}
  \includegraphics[width=.45\linewidth]{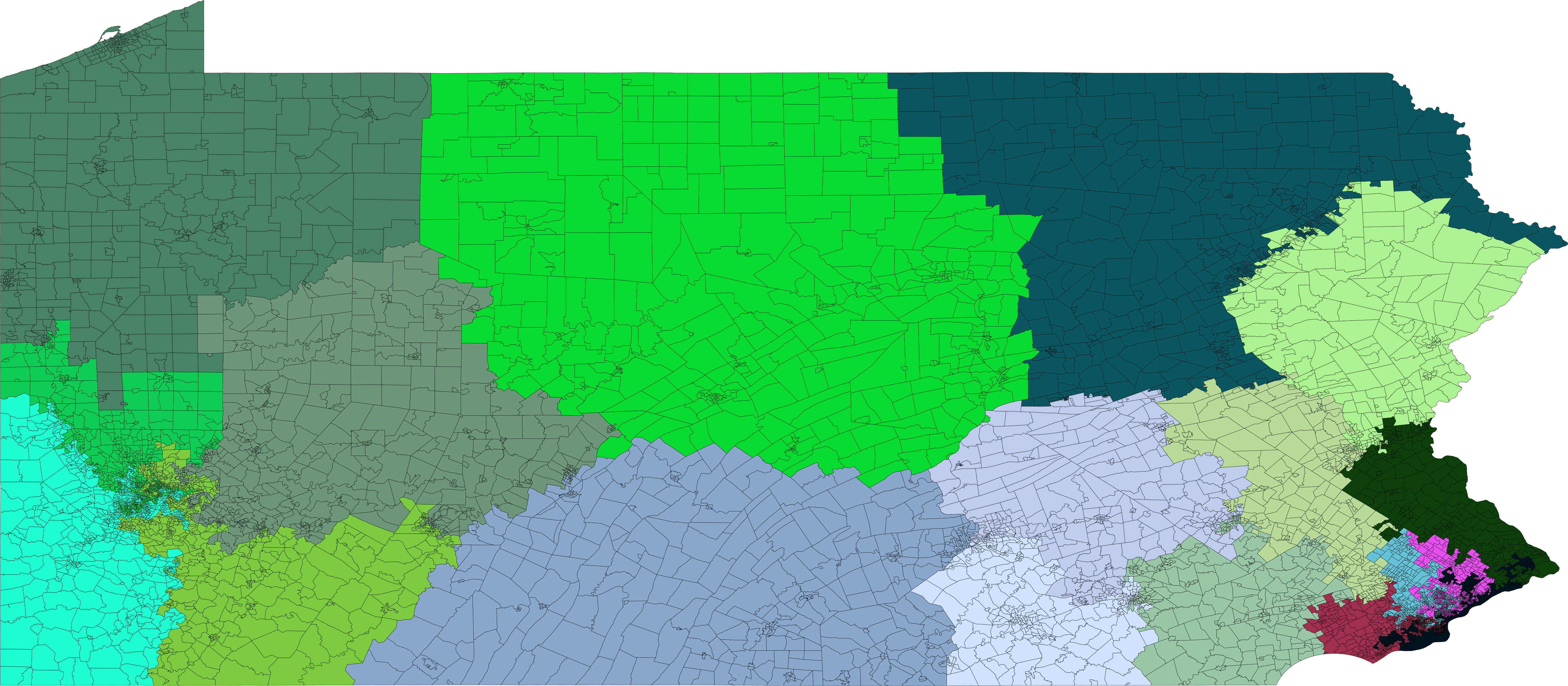}
  \hspace{\stretch{1}}
  \caption{\label{f.globaldistrictings}\textbf{Left:} The current districting of Pennsylvania.  \textbf{Right:} A districting produced by the Markov chain after $2^{40}$ steps.  (Detailed parameters for this run are given in the supplement.)}
\end{figure*}

As a straightforward application of the $\sqrt \e$ test, we can achieve rigorous $p$-values in Markov models of political districtings in spite of the lack of bounds on mixing times of the chains.  In particular, for all choices of the constraints on valid districtings we tested, the $\sqrt \e$ test showed that the current Congressional districting of Pennsylvania is an outlier at significance thresholds ranging from $p\approx 2.5\cdot 10^{-4}$ and $p\approx 8.1\cdot 10^{-7}$.   Detailed results of these runs are in the Supplement.

A key advantage of the Markov chain approach to gerrymandering is that it rests on a rigorous framework; namely, comparing the actual districting of a state with typical (i.e., random) districtings from a well-defined set of valid districtings.  The rigor of the approach thus depends on the availability of a precise definition of what constitutes a valid districting; in principle and in practice, this is a a thorny legal question.  While some work on Markov chains for redistricting (in particular, \cite{duke}) has aimed to account for complex constraints on valid districtings, our main goal in the present manuscript is to illustrate the application of the $\sqrt \e$ test.  In particular, we have erred on the side of using relatively simple sets of constraints on valid districtings in our Markov chains, while checking that our significance results are not highly sensitive to the parameters that we use.  On the other hand, our test immediately gives a way of putting the work such as that in \cite{duke} on a rigorous statistical footing.

The full description of the Markov chain we use in the present work is given in the supplement, but it's basic structure is as follows: Pennsylvania is divided into roughly 9000 Census blocks.   (These blocks can be seen upon close inspection of Figure \ref{f.globaldistrictings}.)  We define a division of these blocks into 18 districts to be a valid districting of Pennsylvania if districts differ in population by less than $2\%$, are contiguous, are simply connected (districts do not contain holes) and are ``compact'' in ways we discuss in the supplement; roughly, this final condition prohibits districts with extremely contorted structure.  The state space of the Markov chain is the set of valid districtings of the state, and one step of the Markov chain consists of randomly swapping a precinct on the boundary of a district to a neighboring district, if the result is still a valid districting.  As we discuss in the supplement, the chain is adjusted slightly to ensure that the uniform distribution on valid districtings is indeed a stationary distribution for the chain.  Observe that this Markov chain has a potentially huge state space; if the only constraint on valid districtings was that the districts have roughly equal population, there would be $10^{10000}$ or so valid districtings.  Although contiguity and especially compactness are severe restrictions which will decrease this number substantially, it seems difficult to compute effective upper bounds on the number of resulting valid districtings, and certainly, it is still enormous.  Impressively, these considerations are all immaterial to our very general method.

Applying the $\sqrt \e$ test involves the choice of a label function $\om(\sigma)$, which assigns a real-number to each districting.  We have conducted runs using two label functions: $\omvar$ is the (negative) variance of the proportion of Democrats in each district of the districting (as measured by 2012 presidential votes), and $\ommm$ is the difference between the median and mean of the proportions of Democrats in each district.  $\ommm$ is motivated by the fact that this metric has a long history of use in gerrymandering, and is directly tied to the goals of gerrymandering, while the use of the variance is motivated by the fact that it can change quickly with small changes in districtings.  These two choices are discussed further in the Supplement, but an important point is that our use of these label functions \textbf{is not} based on an assumption that small values of $\omvar$ or of $\ommm$ directly imply gerrymandering.  Instead, as Theorem \ref{t.gtest} is valid for any fixed label function, these labels are tools used to demonstrate significance, which are chosen because they are simple and natural functions on vectors which can be quickly computed, seem likely to be different for typical versus gerrymandered districtings, and have the potential to change relatively quickly with small changes in districtings.   For the various notions of valid districtings we considered, the $\sqrt{\e}$ test demonstrated significance at $p$-values in the range $10^{-4}$ to $10^{-5}$ for the $\ommm$ label function, and in the range $10^{-4}$ and $10^{-7}$ for the $\omvar$ label function.   

As noted earlier, the $\sqrt \e$ test can easily be used with more complicated Markov chains which capture more intricate definitions of the set of valid districtings.  For example, the current districting of Pennsylvania splits fewer rural counties than the districting on the right in Figure \ref{f.globaldistrictings}, and the number of county splits is one of many metrics for valid districtings considered by the Markov chains developed in \cite{duke}.  Indeed, our test will be of particular value in cases where complex notions of what constitute a valid districtings slow the chain to make the heuristic mixing assumption particularly questionable.  Regarding mixing time: even our chain with relatively weak constraints on the districtings (and very fast running time in implementation) appears to mix too slowly to sample $\pi$, even heuristically; in Figure \ref{f.globaldistrictings}, we see that several districts seem still to have not left their general position from the initial districting, even after $2^{40}$ steps.

On the same note, it should also be kept in mind that while our result gives a method to rigorously disprove that a given districting is unbiased---e.g., to show that the districting is unusual among districtings $X_0$ distributed according to the stationary distribution $\pi$---it does so \emph{without} giving a method to sample from the stationary distribution.  In particular, our method can not answer the question of how many seats Republicans and Democrats should have in a typical districting of Pennsylvania, because we are still not mixing the chain.  Instead, Theorem \ref{t.gtest} has given us a way to disprove $X_0\sim \pi$ without sampling $\pi$.

\section{Proof of Theorem \ref{t.gtest}}
We let $\pi$ denote any stationary distribution for $\cM$, and suppose that the initial state $X_0$ is distributed as $X_0\sim \p$, so that in fact $X_i\sim \pi$ for all $i$.   We say $\s_j$ is \emph{$\ell$-small} among $\s_0,\dots,\s_k$ if there are at most $\ell$ indices $i\neq j$ among $0,\dots,k$ such that the label of $\s_i$ is at most the label of $\s_j$.  In particular, $\s_j$ is $0$-small among $\s_0,\s_1,\dots,\s_k$ when its label is the unique minimum label, and we encourage readers to focus on this $\ell=0$ case in their first reading of the proof.

For $0\leq j\leq k$, we define
\begin{align*}
\r_{j,\ell}^k&:=\Pr\left(X_j\mbox{ is $\ell$-small among }X_0,\dots,X_k\right)\\
\r_{j,\ell}^k(\s)&:=\Pr\left(X_j\mbox{ is $\ell$-small among }X_0,\dots,X_k\mid X_j=\s\right)
\end{align*}

Observe that since $X_s\sim \pi$ for all $s$, we also have that
\begin{multline}\label{l.lshift}
\r_{j,\ell}^k(\s)=\\\Pr\left(X_{s+j}\mbox{ is $\ell$-small among }X_s,\dots,X_{s+k}\mid X_{s+j}=\s\right)
\end{multline}

We begin by noting two easy facts.
\begin{observation}\label{o.rev}
$\r_{j,\ell}^k(\s)=\r_{k-j,\ell}^k(\sigma)$.
\end{observation}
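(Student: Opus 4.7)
The plan is to exploit reversibility in its cleanest form: since we are in the stationary case ($X_0\sim \pi$), the definition of reversibility quoted in the paper says that $(X_0,X_1,\dots,X_k)$ has the same joint distribution as $(X_k,X_{k-1},\dots,X_0)$. Everything in the statement to be proved is a function of these joint distributions, so reversibility ought to force the equality almost tautologically.

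Concretely, I would introduce the reversed process $Y_i := X_{k-i}$ for $0\le i\le k$, and note that $(Y_0,\dots,Y_k)$ is identically distributed with $(X_0,\dots,X_k)$. The key observation is that the event ``$X_j$ is $\ell$-small among $X_0,\dots,X_k$'' depends only on the multiset of labels of the entries and which position is singled out; reversing the list leaves both the multiset and the singled-out entry (now at position $k-j$) unchanged. Thus the event ``$X_j$ is $\ell$-small among $X_0,\dots,X_k$'' coincides with ``$Y_{k-j}$ is $\ell$-small among $Y_0,\dots,Y_k$'', and the conditioning $X_j=\sigma$ is the same as $Y_{k-j}=\sigma$.

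Combining these:
\begin{align*}
\rho_{j,\ell}^k(\sigma) &= \Pr(X_j\text{ is $\ell$-small among }X_0,\dots,X_k\mid X_j=\sigma)\\
&= \Pr(Y_{k-j}\text{ is $\ell$-small among }Y_0,\dots,Y_k\mid Y_{k-j}=\sigma)\\
&= \Pr(X_{k-j}\text{ is $\ell$-small among }X_0,\dots,X_k\mid X_{k-j}=\sigma)\\
&= \rho_{k-j,\ell}^k(\sigma),
\end{align*}
where the third equality uses the equality in distribution of the $Y$-sequence and the $X$-sequence.

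I do not anticipate a genuine obstacle here; the only thing to be careful about is the logical content of the word ``reversible'' as defined in the paper, namely that the finite-dimensional distributions of the forward and backward chains coincide. Once that is invoked, the rest is just the observation that the $\ell$-small property is symmetric under reversal of the indexing. No estimates or inequalities are needed.
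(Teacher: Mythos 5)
Your proposal is correct and is essentially the paper's own argument: the paper likewise invokes stationarity and reversibility to equate the probabilities of each trajectory $(\s_0,\dots,\s_k)$ and its reversal $(\s_k,\dots,\s_0)$, and observes that the $\ell$-small property at position $j$ transfers to position $k-j$ under this reversal. Your phrasing via the reversed process $Y_i=X_{k-i}$ is just a cleaner packaging of the same sequence-by-sequence bijection.
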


\begin{proof}
Since $\cM=X_0,X_1,\dots$ is stationary and reversible, the probability that $(X_0,\dots,X_k)=(\s_0,\dots,\s_k)$ is equal to the probability that $(X_0,\dots,X_k)=(\s_k,\dots,\s_0)$ for any fixed sequence $(\s_0,\dots,\s_k)$.  Thus, any sequence $(\s_0,\dots,\s_k)$ for which $\s_j=\s$ and $\s_j$ is a $\ell$-small corresponds to an equiprobable sequence $(\s_k,\dots,\s_0)$ for which $\s_{k-j}=\s$ and $\s_{k-j}$ is $\ell$-small.
\end{proof}

\begin{observation}\label{o.split}
$\r_{j,2\ell}^k(\s)\geq \r_{j,\ell}^j(\s)\cdot \r_{0,\ell}^{k-j}(\s).$
\end{observation}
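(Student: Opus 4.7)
My plan is to split the trajectory at the distinguished time $j$ and apply the Markov property, after making a simple combinatorial observation that relates ``$2\ell$-smallness on the full walk'' to ``$\ell$-smallness on each half.''

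Introduce the events
\begin{align*}
A &= \{X_j \text{ is } \ell\text{-small among } X_0,\dots,X_j\},\\
B &= \{X_j \text{ is } \ell\text{-small among } X_j,\dots,X_k\}.
\end{align*}
The key observation is that $A\cap B$ implies $X_j$ is $2\ell$-small among $X_0,\dots,X_k$: on $A$ there are at most $\ell$ indices $i\in\{0,\dots,j-1\}$ with $\om(X_i)\leq \om(X_j)$, and on $B$ at most $\ell$ indices $i\in\{j+1,\dots,k\}$ with $\om(X_i)\leq \om(X_j)$; adding these gives at most $2\ell$ such indices in $\{0,\dots,k\}\setminus\{j\}$, which is precisely the definition of $2\ell$-smallness of $X_j$ among $X_0,\dots,X_k$. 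Hence
\[\r_{j,2\ell}^k(\s)\;\geq\;\Pr(A\cap B\mid X_j=\s).\]

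Next I would invoke the Markov property: conditional on $X_j=\s$, the past $(X_0,\dots,X_{j-1})$ and the future $(X_{j+1},\dots,X_k)$ are independent. Since $A$ is determined by $X_j$ together with the past and $B$ by $X_j$ together with the future, this gives the factorization
\[\Pr(A\cap B\mid X_j=\s)\;=\;\Pr(A\mid X_j=\s)\cdot\Pr(B\mid X_j=\s).\]
Identifying the factors finishes the job. By definition $\Pr(A\mid X_j=\s)=\r_{j,\ell}^j(\s)$, while the shift identity (\ref{l.lshift}) applied with $s=j$ reindexes the future so that $X_j$ plays the role of the starting variable in a walk of length $k-j$, giving $\Pr(B\mid X_j=\s)=\r_{0,\ell}^{k-j}(\s)$.

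I do not anticipate any real obstacle: the only thing to watch is that the index $j$ is excluded from both local smallness counts (the definition of $\ell$-small discards the index of the point itself), so it is not double-counted when the two bounds are combined, and the inequality—as opposed to equality—in the statement simply reflects the fact that $X_j$ can still be $2\ell$-small among $X_0,\dots,X_k$ without being $\ell$-small on both halves.
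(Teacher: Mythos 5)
Your proposal is correct and follows essentially the same route as the paper: split at time $j$, note that $\ell$-smallness on each half implies $2\ell$-smallness overall, use conditional independence of past and future given $X_j=\s$, and identify the two factors via the definition and the shift identity \eqref{l.lshift} with $s=j$. Your write-up just makes the counting and the Markov-property step more explicit than the paper does.
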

\begin{proof}
Consider the events that $X_j$ is a $\ell$-small among $X_0,\dots,X_j$ and among $X_j,\dots,X_k$, respectively.  These events are conditionally independent, when conditioning on the value of $X_j=\s$, and $\r_{j,\ell}^j(\s)$ gives the probability of the first of these events, while applying equation \eqref{l.lshift} with $s=j$ gives that $\r_{0,\ell}^{k-j}(\s)$ gives the probability of the second event.

Finally, when both of these events happen, we have that $X_j$ is $2\ell$-small among $X_0,\dots,X_k$.
\end{proof}

We can now deduce that
\begin{equation}\label{l.rs}
\r_{j,2\ell}^k(\s)\geq\rho_{j,\ell}^j(\s)\cdot \rho_{0,\ell}^{k-j}(\s)\\=
\r_{0,\ell}^j(\s)\cdot \r_{0,\ell}^{k-j}(\s)\geq \left(\r_{0,\ell}^{k}(\s)\right)^2.
\end{equation}
Indeed, the first inequality follows from Observation \ref{o.split}, the equality follows from Observation \ref{o.rev}, and the final inequality follows from the fact that $\r_{j,\ell}^k(\s)$ is monotone nonincreasing in $k$ for fixed $j,\ell,\s$.

 Observe now that 
$
\r_{j,\ell}^k=\E \r_{j,\ell}^k(X_j),
$
where the expectation is taken over the random choice of $X_j\sim \pi$.

Thus taking expectations in \eqref{l.rs} we find that
\begin{multline}\label{l.exp}
\r_{j,2\ell}^k=\E\r_{j,2\ell}^k(\s)\geq \E\left(\left(\r_{0,\ell}^{k}(\s)\right)^2\right)\\\geq \left(\E\r_{0,\ell}^{k}(\s)\right)^2=(\r_{0,\ell}^{k})^2.
\end{multline}
where the second of the two inequalities is the Cauchy-Schwartz inequality.

For the final step in the proof, we sum the left and righthand sides \eqref{l.exp} to obtain 
\[
\sum_{j=0}^k \rho_{j,2\ell}^k\geq(k+1) (\rho_{0,\ell}^{k})^2
\]
If we let $\xi_j$ $(0\leq i\leq k)$ be the indicator variable which is 1 whenever $X_j$ is $2\ell$-small among $X_0,\dots,X_k$, then $\sum_{j=0}^k{\xi_j}$ is the number of $2\ell$-small terms, which is always at most $2\ell+1$, so that linearity of expectation gives that
\begin{equation}\label{l.sums}
2\ell+1\geq (k+1) (\r_{0,\ell}^k)^2,
\end{equation}
giving that 
\begin{equation}\label{l.final}
\r_{0,\ell}^k\leq\sqrt{\tfrac{2\ell+1}{k+1}}.
\end{equation}
This proves Theorem \ref{t.gtest}, as if $X_i$ is an $\e$-outlier among $X_0,\dots,X_{k}$, then $X_i$ is necessarily $\ell$-small among $X_0,\dots,X_{k}$ for $\ell=\flr{\e(k+1)-1}\leq \e(k+1)-1$, and then we have $2\ell+1\leq 2\e(k+1)-1\leq 2\e(k+1)$.\qed

\bigskip

\subsection*{Acknowledgment}
We are grateful for helpful conversations with John Nagle, Danny Sleator, and Dan Zuckerman.

\bibliographystyle{plain}
\bibliography{outliers}

\widetext
\clearpage

\begin{center}
\textbf{\large Supplement: Assessing significance without mixing}
\end{center}
\setcounter{equation}{0}
\setcounter{figure}{0}
\setcounter{table}{0}
\setcounter{page}{1}
\setcounter{section}{0}
\makeatletter
\renewcommand{\theequation}{S\arabic{equation}}
\renewcommand{\thefigure}{S\arabic{figure}}
\renewcommand{\thesection}{S\arabic{section}}

\section{Precinct data}
Precinct level voting data and associated shape files were obtained from the Harvard Election Data Archive (\url{http://projects.iq.harvard.edu/eda/home}) \cite{harvard}.  The data for Pennsylvania contains 9256 precincts.  The data was altered in two ways: 258 precincts that were contained within another precinct were merged and 79 precincts that were not contiguous were split into continuous areas, with voting and population data distributed proportional to the area.  The result was a set of 9060 precincts.  All geometry calculations and manipulations were accomplished in R with ``maptools'', ``rgeos'', and ``BARD'' R packages.  The final input to the Markov chain is a set of precincts with corresponding areas, neighbor precincts, the length of the perimeter shared with each neighbor, voting data from 2012, and the current Congressional district the precinct belongs to. 

\section{Valid districtings}
We restrict our attention to districtings satisfying 4 restrictions, each of which we describe here.

\subsection{Contiguous districts}
A valid districting must have the property that each of its districts is contiguous.  In particular, two precincts are considered adjacent if the length of their shared perimeter is nonzero (in particular, precincts meeting only at a point are \emph{not} adjacent), and a district is contiguous if any pair of precincts is joined by a sequence of consecutively adjacent pairs.

\subsection{Simply connected districts}
A valid districting must have the property that each of its districts is simply connected.  Roughly speaking, this means the district cannot have a ``hole''.  Precisely, a district is simply connected if for any circuit of precincts in the district, all precincts in the region bounded by the circuit also lie in the district.

Apart from aesthetic reasons for insisting that districtings satisfy this condition, there is also a practical reason: it is easier to have a fast local check for contiguity when one is also maintaining that districtings are simply connected.

\subsection{Small population difference}
According to the ``one person, one vote'' doctrine, Congressional districts for a state are required to be roughly equal in population.  In the current districting of Pennsylvania, for example, the maximum difference in district population from the average population is roughly $1\%$.  Our chain can use different tolerances for population difference between districts and the average, and the tolerances used in the runs below are indicated.

\subsection{Compactness}
If districtings were drawn randomly with only the first three requirements, the result would be districtings in which districts have very complicated, fractal-like structure (since most districtings have this property).  The final requirement on valid districtings prevents this, by ensuring that the districts in the districting have a reasonably nice shape.  This requirement on district shape is commonly termed ``compactness'', is explicitly required of Congressional districts by the Pennsylvania constitution.

Although compactness of districts does not have a precise legal definition, various precise metrics have been proposed to quantify the compactness of a given district mathematically.  One of the simplest and most commonly used metrics is the Polsby-Popper metric, which defines the compactness of a district as
\[
C_D=\frac{4\pi A_D}{P_D^2},
\]
where $A_D$ and $P_D$ are the area and perimeter of the district $D$.  Note that the maximum value of this measure is 1, which is achieved only by the disc, as a result of the isoperimetric inequality.  All other shapes have compactness between 0 and 1, and smaller values indicate more ``contorted'' shapes.

Perhaps the most straightforward use of this compactness measure is to enforce some threshold on compactness, and require valid districtings to have districts whose compactness is above that lower bound.  (For consistency with our other metrics, we actually impose an upper bound on the reciprocal $1/C_D$ of the Polsby-Popper compactness $C_D$ of each district $D$.) In the table of runs given in Section \ref{s.runs}, this is the $L^\infty$ compactness metric.

One drawback of using this method is that the current districting of Pennsylvania has a few districts which have very low compactness values (they are much stranger looking than the other districts). Applying this restriction will allow all 18 districts to be as bad as the threshold chosen, so that, in particular, we will be sampling districtings from space in which all 18 districts may be as bad as the worst district in the current plan.  In fact, because there are more noncompact regions than compact ones, one expects that in a typical such districting, all 18 districts would be nearly as bad as the currently worst example. 

To address this issue, and also to demonstrate the robustness of our finding for the districting question, we also consider some alternate restrictions on the districting, which measure how reasonable the districting as a whole is with regard to compactness.  For example, one simple measure of this is to have a threshold for the maximum allowable sum
\[
\frac 1 {C_1}+\dots+\frac 1 {C_{18}}
\]
of the inverse compactness values of the 18 districts.  This is the $L^1$ metric in the table in Section \ref{s.runs}.  Similarly, we could have a threshold for the maximum allowable sum of squares
\[
\frac 1 {C_1^2}+\dots+\frac 1 {C_{18}^2}.
\]
This is the $L^2$ metric in the table.  Finally, we can have a simple condition which simply ensures that the total perimeter 
\[
P_1+\dots+P_{18}
\]
is less than some threshold.


\subsection{Other possible constraints}
It is possible to imagine many other reasonable constraints on valid districtings.  For example, the PA constitution currently requires of districtings for the Pennsylvania Senate and Pennsylvania House of Representatives that:
\begin{quote}
Unless absolutely necessary no county, city, incorporated town, borough, township or ward shall be divided in forming either a senatorial or representative district.
\end{quote}
There is no similar requirement for U.S. Congressional districts in Pennsylvania, which is what we consider here, but it is still a reasonable constraint to consider.

There are also interesting legal-questions about the extent to which Majority-Minority districts (in which an ethnic minority is an electoral majority) are either required to be intentionally created, or forbidden to be intentionally created. On the one hand, the U.S.  Supreme Court ruled in \emph{Thornburg v.~Gingles} (1986) that in certain cases, a geographically concentrated  minority population is entitled to a Congressional district in which it constitutes a majority.   On the other hand, several U.S. Supreme Court cases (\emph{Shaw v.~Reno} (1993), \emph{Miller v.~Johnson} (1995), and \emph{Bush v.~Vera} (1996)) Congressional districtings were thrown out because because they contained intentionally-drawn Majority-Minority districts which were deemed to be a ``racial gerrymander''.  In any case, we have not attempted to answer the question of whether or how the existence of Majority-Minority districts should be quantified.  (We suspect that the unbiased procedure of drawing a random districting is probably acceptable under current Majority-Minority district requirements, but in any case, our main intent is to demonstrate the application of the $\sqrt \e$ test.)

Importantly, we emphasize that any constraint on districtings which can be precisely defined (i.e., by giving an algorithm which can identify whether a districting satisfies the constraint) can be used in the Markov Chain setting in principle.

\section{The Markov Chain}
The Markov chain $\cM$ we use has as its state space $\Sigma$ the space of all valid districtings (with 18 districts) of Pennsylvania.  Note that there is no simple way to enumerate these, and there is an enormous number of them.

A simple way to define a Markov chain on this state space is to transition as follows:
\begin{enumerate}
\item From the current state, determine the set $S$ of all pairs $(\rho,D)$, where $\rho$ is a precinct in some district $D_\rho$, and $D\neq D_\rho$ is a district which is adjacent to $\rho$.  Let $N_S$ denote the size of this set.
\item From $S$, choose one pair $(\rho_0,D_0)$ uniformly at random.
\item Change the district membership of $\rho_0$ from $D_{\rho_0}$ to $D_0$, \textbf{if} the resulting district is still valid.
\end{enumerate}

Let the Markov chain with these transition rules be denoted by $\cM_0$.  This is a perfectly fine reversible Markov Chain to which our theorem applies, but the uniform distribution on valid districtings is not stationary for $\cM_0$, and so we cannot use $\cM_0$ to make comparisons between a presented districting and a uniformly random valid districting. 

\newcommand{\nmax}{N_{\rm max}}
A simple way to make the uniform distribution stationary is to ``regularize'' the chain, that is, to modify the chain so that the number of legal steps from any state is equal.  (This is not the case for $\cM_0$, as the number of precincts on the boundaries of districts will vary from districting to districting.)  We do this by adding loops to each possible state.  In particular, using a theoretical maximum $\nmax$ on the possible size of $N_S$ for any district, we modify the transition rules as follows:
\begin{enumerate}
\item From the current state, determine the set $S$ of all pairs $(\rho,D)$, where $\rho$ is a precinct in some district $D_\rho$, and $D\neq D_\rho$ is a district which is adjacent to $\rho$.  Let $N_S$ denote the size of this set.
\item \label{s.loop} With probability $1-\frac{N_S}{\nmax}$, remain in the current state for this step.  With probability $\frac{N_S}{\nmax}$, continue as follows:
\item From $S$, choose one pair $(\rho_0,D_0)$ uniformly at random.
\item \label{s.swap} Change the district membership of $\rho_0$ from $D_{\rho_0}$ to $D_0$, \textbf{if} the resulting district is still valid.  If it is not, remain in the current district for this set.
\end{enumerate}
In particular, with this modification, each state has exactly $\nmax$ possible transitions, which are each equally likely; many of these transitions are loops back to the same state.  (Some of these loops arise from Step \ref{s.loop}, but some also arise when the \textbf{if} condition in Step \ref{s.swap} fails.)

\section{The label function}
In principle, any label function $\om$ could be used in the application of the $\sqrt \e$ test; note that Theorem \ref{t.gtest} places no restrictions on $\omega$.  Thus when we choose which label function to use, we are making a choice based on what is likely to achieve good significance, rather than what is valid statistical reasoning (subject to the caveat discussed in the last paragraph of this section).  To choose a label function which was likely to allow good statistical power, we want to have a function which:
\begin{enumerate}
\item is likely very different for a gerrymandered districting compared with a typical districting, and
\item is sensitive enough that small changes in the districting might be detected in the label function.
\end{enumerate}
While the role of the first condition in achieving outlier status is immediately obvious, the second property is also crucial to detecting significance with our test, which makes use of trajectories which may be quite small compared with the mixing time of the chain.  For the $\sqrt{\e}$ test to succeed at demonstrating significance, it is not enough for the presented state $\s_0$ to actually be an outlier against $\pi$ with respect to $\om$; this must also be detectable on trajectories of the fixed length $k$, which may well be too small to mix the chain.  This second property discourages the use of ``coarse grained'' label functions such as the number of seats out of 18 the Democrats would hold with the districting in question, since many swaps would be needed to shift a representative from one party to another.

We considered two label functions for our experiments (each selected with the above desired properties in mind) to demonstrate the robustness of our framework.  The first label function $\omvar$ we used is simply the negative of the variance in the proportions of Democrat voters among the districts.  Thus, given a districting $
\sigma$, $\omvar(\sigma)$ is calculated as
\[
\omvar(\s)=-\brac{\frac{\delta_1^2+\delta_2^2+\dots +\delta_{18}^2}{18}-\left(\frac{\delta_1+\delta_2+\dots +\delta_{18}}{18}\right)^2}
\]
where for each $i=1,\dots,18$, $\delta_i$ is the fraction of voters in that district which voted for the Democrat presidential candidate in 2012.  We suspect that the variance is a good label function from the standpoint of the first characteristic listed above, but a great label function from the standpoint of the second characteristic.  Recall that in practice, gerrymandering is accomplished by packing the voters of one party into a few districts, in which they make up an overwhelming majority.  This, naturally, results in a high-variance vector of party proportions in the districts.  However, high-variance districtings can exist which do not favor either party (note, for example, that the variance is symmetric with respect to Democrats and Republicans, ignoring third-party affiliations).  This means that for a districting which is biased against $\pi$ due to a partisan gerrymander to ``stand out'' as an outlier, it must have especially high variance.  In particular, statistical significance will be weaker than it might be for a label function which is more strongly correlated with partisan gerrymandering.  On the other hand, $\omvar$ can detect very small changes in the districting, since essentially every swap will either increase or decrease the variance.  Indeed, for the run of the chain corresponding to the $L^\infty$ constraint (Section \ref{s.runs}), $\omvar(X_0)$ was strictly greater than $\omvar(X_i)$ for the entire trajectory $(1\leq i\leq 2^{40})$.  That is, for the $L^{\infty}$ constraint, the current districting of Pennsylvania was the absolute worst districting seen according to $\omvar$ among the more than trillion districtings on the trajectory. 

The second label function we considered is calculated simply as the difference between the median and the mean of the ratios $\d_1,\dots,\d_{18}$.  This simple metric, called the ``Symmetry Vote Bias'' by McDonald and Best \cite{McBest} and denoted as $\ommm$ by us, is closely tied to the goal of partisan gerrymandering.  As a simple illustration of the connection, we consider the case where the median of the ratios $\d_1,\dots,\d_{18}$ is close to $\tfrac 1 2$.  In this case, the mean of the $\d_i$'s tracks the fraction of votes the reference party wins in order to win half the seats.  Thus a positive Symmetry Vote Bias corresponds to an advantage for the reference party, while a negative Symmetry Vote Bias corresponds to an disadvantage.  The use of the Symmetry Vote Bias in evaluating districtings dates at least to the 19th century work of Edgeworth \cite{edgeworth}.  These features make it an excellent candidate from the standpoint of our first criterion: gerrymandering is very likely to be reflected in outlier-values of $\ommm$.

On the other hand, $\ommm$ is a rather slow-changing function, compared with $\omvar$.  To see this, observe that in the calculation
\[
\textrm{Symmetry Vote Bias}=\textrm{median}-\text{mean},
\]
the mean is essentially fixed, so that changes in $\ommm$ depend on changes in the median.  In initial changes to the districting, only changes to the two districtings giving rise to the median (two since 18 is even) can have a significant impact on $\ommm$.  (On the other hand, changes to any district directly affect $\omvar$.)

It is likely possible to make better choices for the label function $\om$ to achieve better significance.  For example, the metric $B_G$ described by Nagle \cite{nagle} seems likely to be excellent from the standpoints of conditions 1 and 2 simultaneously.  However, we have restricted ourselves to the simple choices of $\omvar$ and $\ommm$ to clearly demonstrate our method, and to make it obvious that we have not tried many labeling functions before finding some that worked (in which case, a multiple hypothesis test would be required).

\smallskip
One point to keep in mind is that often when applying the $\sqrt \e$ test---including in the present application to gerrymandering---we will wish to apply the test, and thus need to define a label function, after the presented state $\s_0$ is already known.  In these cases, care must be taken to choose a ``canonical'' label function $\om$, so that there is no concern that $\om$ was carefully crafted in response to $\s_0$ (in this case, a multiple hypothesis correction would be required, for the various possible $\om$'s which could have been crafted, depended on $\s_0$).  $\omvar$ and $\ommm$ are excellent choices from this perspective: the variance is a common and natural function on vectors, and the Symmetry Vote Bias has an established history in the evaluation of gerrymandering (and in particular, a history which predates the present districting of Pennsylvania).

\section{Runs of the chain}
\label{s.runs}
In Table \ref{t.runs} we give the results of the 8 runs of the chain under various conditions.  Each run was for $k=2^{40}$ steps.  Code and input data for our Markov chain are available at the corresponding author's website (\url{http://math.cmu.edu/~wes}).

\begin{table}[b]
\renewcommand{\arraystretch}{1.2}
\footnotesize
\begin{center}
  \begin{tabular}{c|c|c|c|@{\hskip .1cm}c@{\hskip .1cm}||@{\hskip .1cm}c@{\hskip .1cm}||c|c}
     \shortstack{population\\ threshold} &  \shortstack{compactness \\ measure} &  \shortstack{compactness \\ threshold} &  \shortstack{initial \\ value} &  \shortstack{(steps)\\$k=$} &  \shortstack{label\\function} &  \shortstack{$\e$-outlier\\ at $\e=$} &  \shortstack{significant\\ at $p=$}\\
    \hline \hline
    \multirow{2}{*}{$2\%$} & \multirow{2}{*}{perimeter} & \multirow{2}{*}{$\leq 125$} &  \multirow{2}{*}{$121.2\dots$} & \multirow{2}{*}{$2^{40}$} & $\omvar$ & $3.0974\cdot 10^{-8}$ &$2.4889\cdot 10^{-4}$\\
    &  &  &  &  & $\ommm$ & $5.7448\cdot 10^{-10}$ &$3.3896\cdot 10^{-5}$\\
\hline
    \multirow{2}{*}{$2\%$} & \multirow{2}{*}{$L^1$} & \multirow{2}{*}{$\leq 160$} &  \multirow{2}{*}{$156.4\dots$} & \multirow{2}{*}{$2^{40}$}  & $\omvar$ & $5.0123\cdot 10^{-11}$ &$1.0012\cdot 10^{-5}$\\
    &  &  &  &  & $\ommm$ & $5.6936\cdot 10^{-10}$ & $3.3745\cdot 10^{-5}$\\
\hline
    \multirow{2}{*}{$2\%$} & \multirow{2}{*}{$L^2$} & \multirow{2}{*}{$\leq 44$} &  \multirow{2}{*}{$43.06\dots$} & \multirow{2}{*}{$2^{40}$} & $\omvar$  & $8.2249\cdot 10^{-11}$ &$1.2826\cdot 10^{-5}$\\
   & & & & & $\ommm$ & $6.8038\cdot 10^{-10}$ &$3.6888\cdot 10^{-5}$\\
\hline
    \multirow{2}{*}{$2\%$} & \multirow{2}{*}{$L^\infty$} & \multirow{2}{*}{$\leq 25$} &  \multirow{2}{*}{$24.73\dots$} & \multirow{2}{*}{$2^{40}$} & $\omvar$ & $3.3188\cdot 10^{-13}$ &$8.1472\cdot 10^{-7}$\\
  & & & & & $\ommm$ & $6.9485\cdot 10^{-8}$ &$3.7279\cdot 10^{-4}$\\
  \end{tabular}
\end{center}
\caption{\label{t.runs} Runs of the redistricting Markov Chain, with results of the $\sqrt\e$ test.}
\end{table}

Generally, after an initial ``burn-in'' period, we expect the chain to (almost) never again see states as unusual as the current districting of Pennsylvania, which means that we expect the test to demonstrate significance proportional to the inverse of the square root of the number of steps (i.e., the $p$-value at $2^{42}$ steps should be half the $p$-value at $2^{40}$ steps).  In particular, for the $L^1$, $L^2$, and $L^\infty$ constraints, these runs never revisited states as bad as the initial state after $2^{21}$ steps for the $\ommm$ label, and after $2^6$ steps for the $\omvar$ label.  Note that this agrees with our guess that $\omvar$ had the potential to change more quickly than $\ommm$. The perimeter constraint did revisit enough states as bad as the the given state with respect to the $\omvar$ label to adversely affect its $p$-value with respect to the $\omvar$ label.  This may reflect our guess that the $\omvar$ label is worse than the $\ommm$ label in terms of how easily it can distinguish gerrymandered districtings from random ones.

The parameters for the first row were used for Fig 2 of the paper.  

One quick point: although we have experimented here with different compactness measures, there is no problem of multiple hypothesis correction to worry about, as \emph{every} run we encountered produces strong significance for the bias of the initial districting.  The point of experimenting with the notion of compactness is to demonstrate that this a robust framework, and that the finding is unlikely to be sensitive to minor disagreements over the proper definition of the set of valid districtings.

\begin{figure}
  \begin{center}
  \includegraphics[width=.24\linewidth]{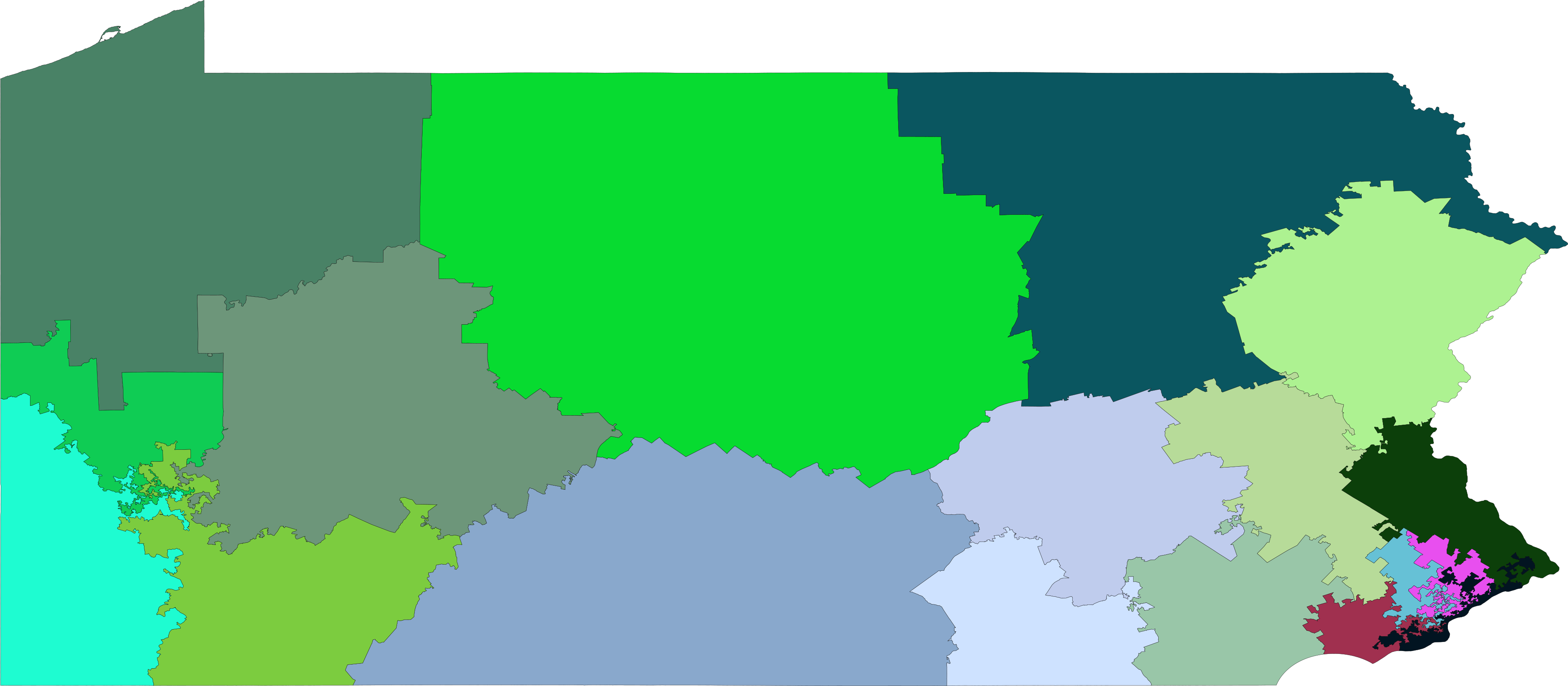}
  \includegraphics[width=.24\linewidth]{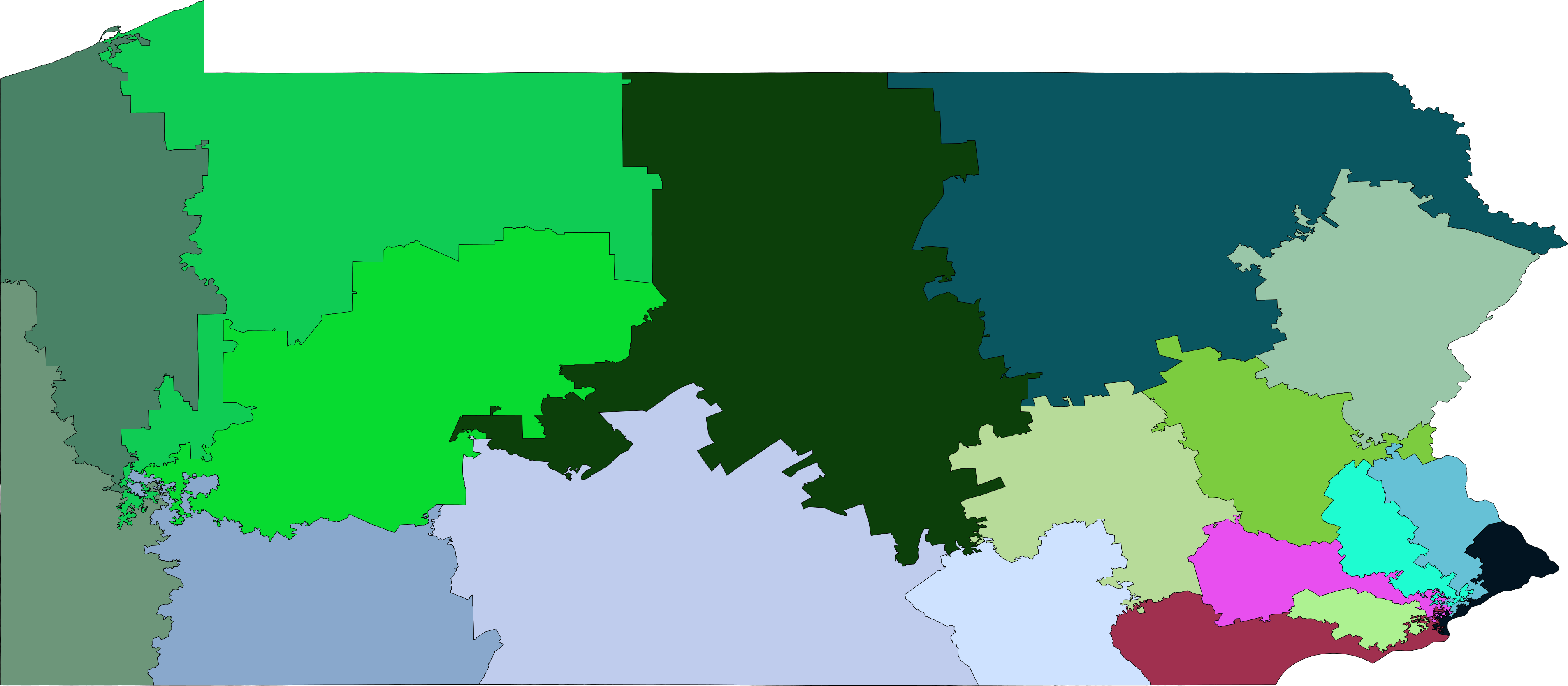}
  \includegraphics[width=.24\linewidth]{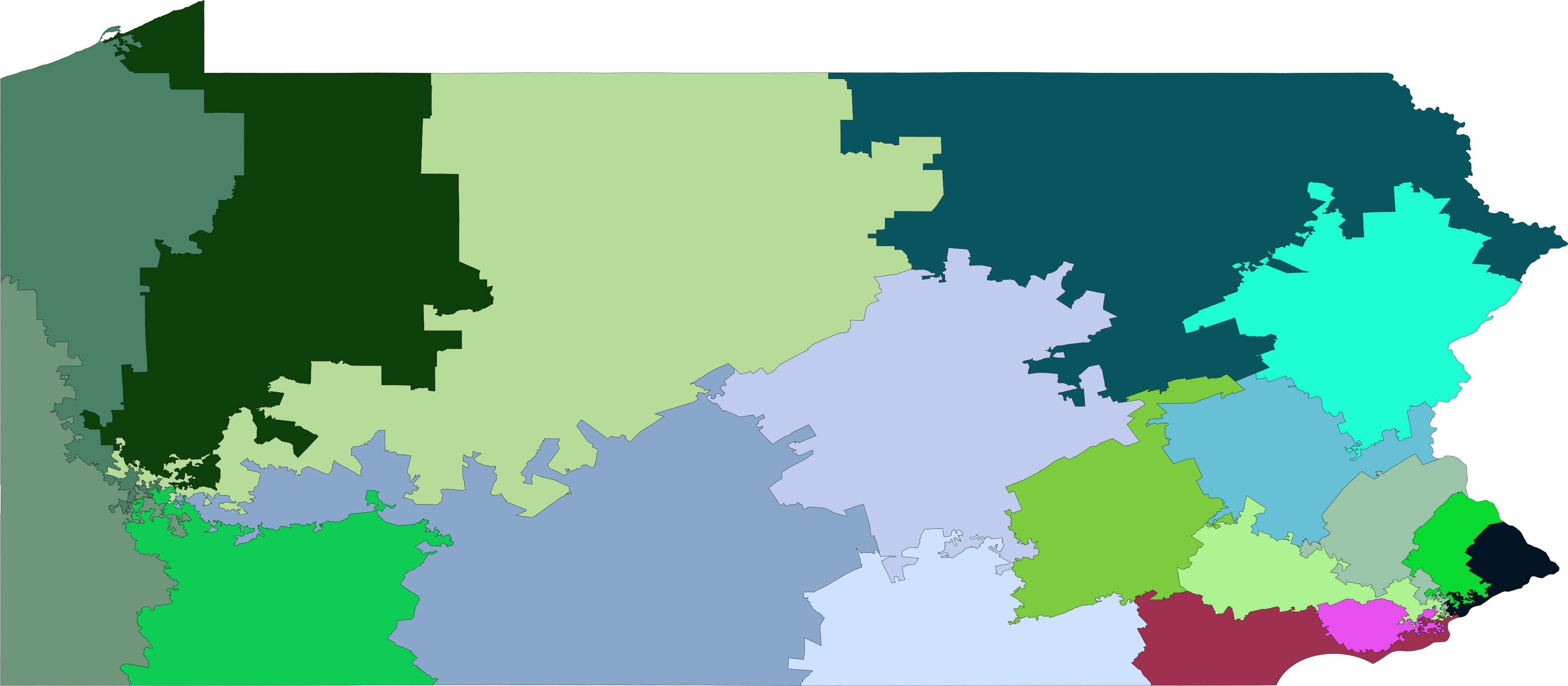}
  \includegraphics[width=.24\linewidth]{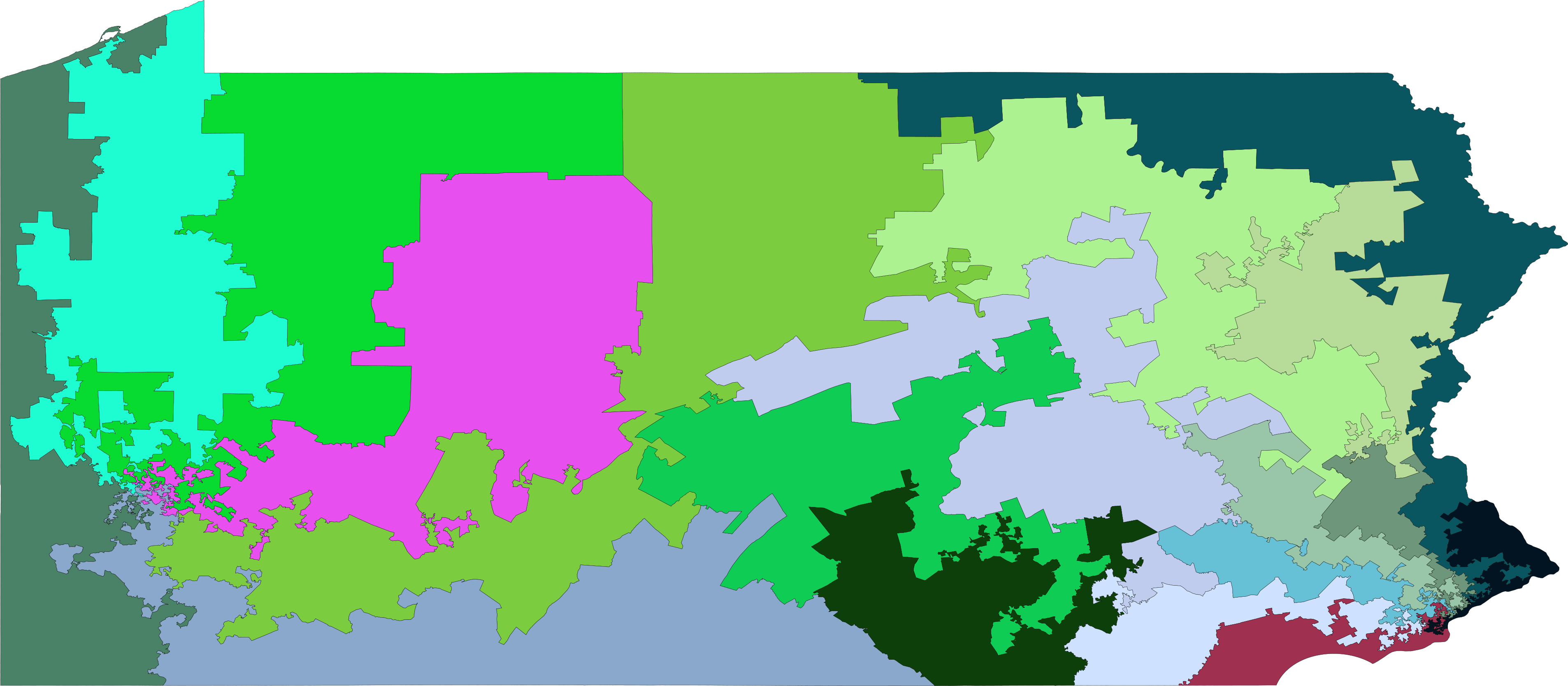}
  \end{center}
  \caption{\label{f.final} The last state from each of the above runs of the chain (perimeter, $L^1$, $L^2$, and $L^\infty$, respectively).  Note that the $L^\infty$ districting is quite ugly; with this notion of validity, every district among the 18 is allowed to be as noncompact as the worst district in the current Pennsylvania districting.  The perimeter constraint produces a districting which appears clean at a large scale, but allows rather messy city districts, since they contribute only moderately to the perimeter anyways.  The $L^1$ and $L^2$ constraints are more balanced.  Note that none of these districtings should be expected to be geometrically ``nicer'' than the current districting of Pennsylvania.  Indeed, the point of our Markov Chain framework is to compare the present districting of Pennsylvania with other ``just as bad'' districtings, to observe that even among this set, the present districting is atypical.}
\end{figure}
\section{An example where $p\approx \sqrt \e$ is best possible}

It might be natural to suspect that observing that a presented state $\sigma$ is an $\e$-outlier on a random trajectory $\sigma$ is significant something like $p\approx \e$ rather than the $p\approx \sqrt \e$ established by Theorem I.1.  However, since Theorem I.1 places no demand on the mixing rate of $\cM$, it should instead seem remarkable that any significance can be shown in general, and indeed, we show by example in this section that significance at $p\approx \sqrt \e$ is essentially best possible.

 Let $N$ be some large integer.  We let $\cM$ be the Markov chain where $X_0$ is distributed uniformly in $[0,1,2,\dots,N-1]$, and, for any $i\geq 1$, $X_i$ is equal to $X_{i-1}+\zeta_i$ computed modulo $N$, where $\zeta_i$ is $1$ or $-1$ with probability $\tfrac 1 2$.  Note that the chain is stationary and reversible.

If $N$ is chosen large relative to $k$, then with probability arbitrarily close to $1$, the value of $X_0$ is at distance greater than $k$ from 0 (in both directions).  Conditioning on this event, we have that $X_0$ is minimum among $X_0,\dots,X_k$ if and only if all the partial sums $\sum_{i=1}^j \zeta_i$ are positive.  This is just the probability that a $k$-step 1-dimensional random walk from the origin takes a first step to the right and does not return to the origin.  This is a classical problem in random walks, which can be solved using the reflection principle.

In particular, for $k$ even, the probability is given by
\[
\frac 1 {2^{k+1}} \binom{k}{k/2}\sim \frac{1}{\sqrt{2\pi k}}.
\]
Since being the minimum out of $X_0,\dots,X_k$ corresponds to being an $\e$-outlier for $\e=\tfrac{1}{k+1}$, this example shows that the probability of being an  $\e$-outlier can be as high as $\sqrt{\e/2\pi}$.

The best possible value of the constant in the $\sqrt{\e}$ test appears to be an interesting problem for future work.

\section{Notes on statistical power}
The effectiveness of the $\sqrt \e$ test depends on the availability of a good choice for $\om$, and the ability to run the test for long enough (in other words, choose $k$ large enough) to detect that the presented state is a local outlier.

It is possible, however, to make a general statement about the power of the test when $k$ is chosen large relative to the actual mixing time of the chain.  Recall that one potential application of the test is in situations where the mixing time of the chain is actually accessible through reasonable computational resources, even though this can't be proved rigorously, because theoretical bounds on the mixing time are not available.  In particular, we do know that the test is very likely to succeed when $k$ is sufficiently large, and $\om(\s_0)$ is atypical.

\begin{theorem}\label{t.power}
  Let $\cM$ be a reversible Markov Chain on $\Sigma$, and let $\om:\Sigma\to \Re$ be arbitrary.   Given $\s_0$, suppose that for a random state $\sigma\sim \pi$, $\Pr(\om(\s)\leq \om(\s_0))\leq \e$.  
  Then with probability at least
  \[
\rho\geq 1-\left(1+\frac{\e k}{10\t_2}\right)\frac{1}{\sqrt {\pmin}} \exp\left(\frac{-\e^2k}{20\t_2}\right)
  \]
$\om(\sigma)$ is an $2\e$-outlier among $\om(\s_0),\om(\s_1),\dots,\om(\s_k)$, where $\s_0,\s_1,\dots$ is a random trajectory starting from $\s_0$.
\end{theorem}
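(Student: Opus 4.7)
The plan is to recast the claim as a concentration inequality for an empirical average along the Markov chain trajectory. Define the indicator
\[
f(\s) := \mathbf{1}[\,\om(\s)\leq\om(\s_0)\,],
\]
so that by hypothesis $\mu := \E_\p f = \Pr_\p(\om(\s)\leq\om(\s_0)) \leq \e$. The event that $\om(\s_0)$ \emph{fails} to be a $2\e$-outlier among $\om(X_0),\dots,\om(X_k)$ is precisely
\[
\sum_{i=0}^{k} f(X_i) \,>\, 2\e(k+1),
\]
i.e.\ the empirical mean of $f$ exceeds its stationary expectation by at least $\e$. Bounding this probability is therefore a Chernoff-type question for an additive functional of a reversible chain, starting from the deterministic state $X_0=\s_0$.

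My plan is to invoke a spectral Chernoff bound for reversible chains in the style of Gillman, Lezaud, and Chung--Lu--Vu. These give, for any $A\sbs\Sigma$ with $\p(A)=\mu$, any initial distribution $q$, and any $\d>0$,
\[
\Pr_q\!\Big(\sum_{i=0}^{n-1}\mathbf{1}[X_i\in A]\,\geq\, n(\mu+\d)\Big) \;\leq\; \big\|q/\p\big\|_2 \cdot P(n,\d,\t_2) \cdot \exp\!\big(-c\,n\d^2/\t_2\big),
\]
where $\t_2$ is the relaxation time $(1-\lambda_2)^{-1}$ of $\cM$ and $P$ is a polynomial prefactor arising from the spectral perturbation / Rayleigh-quotient argument. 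For $q=\mathbf{1}_{\s_0}$ the chi-squared factor is $\|q/\p\|_2 = 1/\sqrt{\p(\s_0)} \leq 1/\sqrt{\pmin}$, which supplies the $1/\sqrt{\pmin}$ in the statement.

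I would then apply this with $A = \{\s : \om(\s)\leq\om(\s_0)\}$, $n = k+1$, $\mu\leq\e$, and $\d = \e$: this places $n\d^2/\t_2 = \e^2(k+1)/\t_2$ in the exponent, producing the $\exp(-\e^2k/20\t_2)$ factor, and yields a prefactor linear in $n\d/\t_2 = \e k/\t_2$, which becomes the $(1+\e k/10\t_2)$ factor. The main obstacle is purely arithmetic: matching the constants $10$ and $20$ exactly requires selecting the version of the spectral Chernoff inequality whose bookkeeping yields these specific numbers, since the various formulations in the literature track the constants differently. Beyond this choice, no further Markov chain structure is needed, because the entire hypothesis on $\om(\s_0)$ has already been compressed into $\E_\p f \leq \e$, and the $2\e$-outlier threshold is exactly a factor-of-two deviation above the stationary mean.
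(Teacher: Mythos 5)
Your proposal is correct and follows essentially the same route as the paper: the authors also reduce the claim to a Gillman-type spectral Chernoff bound for the occupation count of the sublevel set $A=\{\s:\om(\s)\leq\om(\s_0)\}$, with the point-mass initial distribution at $\s_0$ giving the chi-squared factor $1/\sqrt{\pi(\s_0)}\leq 1/\sqrt{\pmin}$ and the choice $\g=\e$ giving the stated exponent. The constants $10$ and $20$ come directly from the particular statement of Gillman's theorem that the paper quotes, so the only issue you flag (constant bookkeeping) is resolved exactly as you anticipated.
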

Here $\t_2$ is the \emph{relaxation time} for $\cM$, defined as $1/(1-\l_2)$, where $\l_2$ is the second eigenvalue of $\cM$.   $\t_2$ is thus the inverse of the spectral gap for $\cM$, and is intimately related to the mixing time of $\cM$\cite{Pbook,Abook,Fbook}.
The probability $\r$ in Theorem \ref{t.power} converges exponentially quickly to $1$, and, moreover, is very close to 1 once $k$ is large relative to $\t_2$.  In particular, Theorem \ref{t.power} shows that the $\sqrt \e$ test \emph{will work} when the test is run for long enough.  Of course, one strength of the $\sqrt \e$ test is that it can sometimes demonstrate bias even when $k$ far too small to mix the chain, as is almost certainly the case for our application to gerrymandering.  When these short-$k$ runs are successful at detecting bias is of course dependent on the relationship of the presented state $\sigma_0$ and its local neighborhood in the chain.

Theorem \ref{t.power} is an application of the following theorem of Gillman:
\begin{theorem}\label{t.gillman}
  Let $\cM=X_0,X_1,\dots$ be a reversible Markov Chain on $\Sigma$, let $A\sbs \Sigma$, and let $N_k(A)$ denote the number of visits to $A$ among $X_0,\dots,X_k$.  Then for any $\g>0$,
  \[
\Pr(N_k(A)/n-\pi(A)>\g)\leq \left(1+\frac{\g n}{10\t_2}\right)\sqrt{\sum_{\s}\frac{\Pr(X_0=\sigma)^2}{\pi(\sigma)}} \exp\left(\frac{-\g^2n}{20\t_2}\right).
\]
\end{theorem}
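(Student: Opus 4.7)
\textbf{Proof proposal for Theorem \ref{t.gillman}.} The plan is to establish a Chernoff-type bound for the Markov chain via a spectral analysis of a perturbed transition operator. Let $P$ denote the transition matrix of $\cM$, viewed as a self-adjoint operator on $L^2(\pi)$ (self-adjointness follows from reversibility). Write $f = \mathbf{1}_A$, so that $N_k(A) = \sum_{i=0}^{n-1} f(X_i)$ (where I read $n$ as the length of the trajectory). For any $\theta > 0$, a standard exponential Markov inequality gives
\[
\Pr(N_k(A)/n - \pi(A) > \gamma) \;\leq\; e^{-\theta n(\pi(A)+\gamma)}\, \E\!\left[e^{\theta N_k(A)}\right].
\]
The moment generating function on the right is a bilinear form in the initial distribution and the matrix product $(P D_\theta)^{n-1}$, where $D_\theta$ is the diagonal matrix with entries $e^{\theta f(\sigma)}$. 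By a Cauchy--Schwarz argument in $L^2(\pi)$, one peels off the factor $\sqrt{\sum_\sigma \Pr(X_0 = \sigma)^2/\pi(\sigma)}$, which is precisely the $L^2(\pi)$-norm of the initial density with respect to $\pi$; what remains to control is the operator norm of $(PD_\theta)^{n-1}$ on $L^2(\pi)$, or equivalently the principal eigenvalue $\Lambda(\theta)$ of the symmetrized version of $PD_\theta$.

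Next I would bound $\Lambda(\theta)$. Since $P$ is self-adjoint on $L^2(\pi)$ with top eigenvalue $1$ (corresponding to the constant eigenvector) and second eigenvalue $\lambda_2 = 1 - 1/\tau_2$, one can write the symmetrized perturbed operator as $D_\theta^{1/2} P D_\theta^{1/2}$ and apply a perturbation argument. Writing $e^{\theta f} = 1 + \theta f + O(\theta^2)$ for small $\theta$, a second-order expansion in $\theta$ gives
\[
\Lambda(\theta) \;\leq\; 1 + \theta \pi(A) + C \theta^2 \tau_2 + O(\theta^3),
\]
where the coefficient of $\theta^2$ reflects the variance of $f$ under $\pi$ times $\tau_2$. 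The clean way to get this estimate with explicit constants is Kato's analytic perturbation theory: express $\Lambda(\theta)$ as a power series in $\theta$, compute the first derivative at $\theta = 0$ via the Feynman--Hellmann formula (giving $\pi(A)$), and bound the second derivative by a resolvent expression that is controlled by the spectral gap $1/\tau_2$. One then obtains $\Lambda(\theta) \leq \exp(\theta \pi(A) + c\theta^2 \tau_2)$ for $\theta$ in a suitable range.

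Plugging this bound back into the Chernoff step yields
\[
\Pr(N_k(A)/n - \pi(A) > \gamma) \;\leq\; \sqrt{\sum_\sigma \frac{\Pr(X_0=\sigma)^2}{\pi(\sigma)}}\, \exp(-\theta \gamma n + c\theta^2 \tau_2 n),
\]
and optimizing in $\theta$ (roughly $\theta \approx \gamma/(2c\tau_2)$) produces the $\exp(-\gamma^2 n/(20\tau_2))$ factor. The prefactor $\bigl(1 + \gamma n/(10\tau_2)\bigr)$ arises from carefully tracking error terms when the optimal $\theta$ is no longer infinitesimal, i.e.\ from using the discrete inequality $e^x \leq 1 + x e^x$ in the right regime rather than a pure quadratic approximation. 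The hard part will be the spectral perturbation bound with explicit constants $1/10$ and $1/20$; this requires a careful truncation of the Kato series and a clean bookkeeping of when the quadratic expansion in $\theta$ ceases to dominate and the linear correction factor takes over. Once that is done, the result follows by combining the Chernoff step, the $L^2(\pi)$ Cauchy--Schwarz bound on the initial distribution, and the optimized eigenvalue bound.
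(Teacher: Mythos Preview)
The paper does not prove this statement at all: it is quoted as ``the following theorem of Gillman'' and used as a black box to derive Theorem~\ref{t.power}. So there is no in-paper proof to compare against.

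That said, your sketch is essentially Gillman's original argument: exponential Markov inequality, rewrite the moment generating function as a bilinear form involving powers of the tilted operator $D_\theta^{1/2} P D_\theta^{1/2}$, extract the $L^2(\pi)$-norm of the initial density via Cauchy--Schwarz, and control the top eigenvalue $\Lambda(\theta)$ by Kato-type analytic perturbation around $\theta=0$, using the spectral gap $1/\tau_2$ to bound the second-order term before optimizing over $\theta$. The outline is correct, and the only genuinely delicate part---as you correctly flag---is nailing down the explicit constants $10$ and $20$ and the polynomial prefactor, which in Gillman's paper comes from a careful truncation of the perturbation series rather than a soft $O(\theta^2)$ argument. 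If you intend to actually carry this out, you would need to make that step precise rather than gesture at it; otherwise, for the purposes of this paper, citing Gillman (as the authors do) is the intended route.
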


\begin{proof}[Proof of Theorem \ref{t.power}]
  We apply Theorem \ref{t.gillman}, with $A$ as the set of states $\s\in \Sigma$ such that $\om(\s)\leq \om(\s_0)$, with $X_0=\s_0$, and with $\g=\e$.  By assumption, $\pi(A)\leq \e$, and Theorem \ref{t.gillman} gives that
  \[
  \Pr(N_k(A)/k>2\e)\leq \left(1+\frac{\e k}{10\t_2}\right)\sqrt{\frac{1}{\pmin}} \exp\left(\frac{-\e^2k}{20\t_2}\right).
\]
\end{proof}

\section{A result for small variation distance}
In this section, we give a corollary of Theorem I.1 which applies to the setting where $X_0$ is distributed not as a stationary distribution $\pi$, but instead with small total variation distance to $\pi$.

The {\em total variation distance} $\tv{\r_1-\r_2}$ between probability distributions $\r_1,\r_2$ on a probability space $\Omega$ is defined to be 
\begin{equation}
  \label{corTV}
\tv{\r_1-\r_2}:=\sup_{E\subseteq \Omega}|\r_1(E)-\r_2(E)|.
\end{equation}
\begin{corollary}\label{cor1}
Let $\cM=X_0,X_1,\dots$ be a reversible Markov chain with a stationary distribution $\p$, and suppose the states of $\cM$ have real-valued labels.  If $\tv{X_0-\p}\leq \e_1$, then for any fixed $k$, the probability that the label of $X_0$ is an $\e$-outlier from among the list of labels observed in the trajectory $X_0,X_1,X_2,\dots,X_k$ is at most $\sqrt{2\e}+\e_1$.
\end{corollary}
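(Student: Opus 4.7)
The plan is to reduce the corollary directly to Theorem \ref{t.gtest} by a standard coupling/total-variation argument. The key observation is that the event we wish to bound depends only on the joint law of the trajectory $(X_0,X_1,\dots,X_k)$, and this joint law depends on the initial distribution through a fixed Markov kernel, so its total variation distance to the stationary joint law is controlled by $\e_1$.

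Concretely, first I would let $\m$ denote the law of $X_0$, let $P$ denote the transition kernel of $\cM$, and let $\m^{(k)}$ and $\p^{(k)}$ denote the joint laws of $(X_0,X_1,\dots,X_k)$ when the initial state is drawn from $\m$ or from $\p$ respectively. I would then observe that one can couple these two joint distributions by coupling $X_0^\m$ and $X_0^\p$ optimally (so that $\Pr(X_0^\m\neq X_0^\p)=\tv{\m-\p}\leq \e_1$), and then continuing both chains with the \emph{same} transitions when they agree, and independently otherwise. Since the chains agree on the whole trajectory whenever they agree at time $0$, this yields
\[
\tv{\m^{(k)}-\p^{(k)}}\leq \tv{\m-\p}\leq \e_1.
\]

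Next, let $E$ denote the event that the label of $X_0$ is an $\e$-outlier among the labels of $X_0,\dots,X_k$; this is a measurable event in the joint space of trajectories. By the definition of total variation distance (\ref{corTV}) applied to $\m^{(k)}$ and $\p^{(k)}$,
\[
\Pr_\m(E)\leq \Pr_\p(E)+\e_1.
\]
By Theorem \ref{t.gtest} applied to the stationary chain, $\Pr_\p(E)\leq \sqrt{2\e}$, and the corollary follows.

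I do not anticipate a serious obstacle: the only point requiring any care is the verification that the TV distance between initial distributions dominates the TV distance between the induced joint laws of the trajectory, which is a standard consequence of the data-processing inequality for the (deterministic) Markov kernel that sends an initial state to the distribution of its trajectory. Everything else is a direct appeal to Theorem \ref{t.gtest}.
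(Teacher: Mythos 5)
Your proof is correct and takes essentially the same route as the paper's: both reduce the corollary to Theorem \ref{t.gtest} by showing that the total variation distance between the joint laws of the trajectories $(X_0,\dots,X_k)$ under the two initial distributions is at most $\e_1$, and then invoking the definition of total variation distance. The paper realizes this data-processing step by writing the trajectory as a deterministic function of $X_0$ and an independent uniform sequence driving the transitions, while you use an explicit optimal coupling of the initial states; these are interchangeable formulations of the same fact.
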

The theorem is intuitively clear; we provide a formal proof below for completeness.

\begin{proof}
  If $\r_1,\r_2,$ and $\t$ are probability distributions, then we have that the product distributions $(\r_1,\t)$ and $(\r_2,\t)$ satisfy
  \begin{equation}
    \label{tve}
  \tv{(\r_1,\t)-(\r_2,\t)}=\tv{\r_1-\r_2}.
  \end{equation}
Our plan now is to split the randomness in the trajectory $X_0,\dots,X_k$ of the Markov Chain into two independent sources: the initial distribution is $X_0\sim \r$, and $\tau$ is the uniform distribution on sequences of length $k$ of real numbers $r_1,r_2,\dots,r_k$ in $[0,1]$.  We can view the distribution of the trajectory $X_0,X_1,\dots,X_k$ as the product $(\r,\t)$ by using sequences of reals $r_1,\dots,r_k$ to choose transitions in the chain; from $X_i=\s_i$, if there are $L$ transitions possible, with probabilities $p_1,\dots,p_L$, then we make the $t$th possible transition if $r_i\in [p_1+\dots+p_{t-1},p_1+\dots+p_{t-1}+p_t)$.

Now we have from \eqref{tve} that if $\tv{\r-\pi}\leq \e_1$, then $\tv{(\r,\t)-(\pi,\t)}\leq \e_1$.  Therefore, any event which would happen with probability at most $p$ for the sequence $X_0,\dots,X_k$ when $X_0\sim \pi$ must happen with probability at most $p+\e_1$ when $X_0\sim \r$ where $\tv{\r-\pi}\leq \e_1$.  This gives the Corollary.
\end{proof}

\end{document}